\newtheorem*{theorem*}{Theorem}
\newtheorem*{conjecture*}{Conjecture}
\theoremstyle{definition}
\newtheorem*{intuition*}{Geometric intuition}
\newtheorem*{remark}{Remark}
\newcommand{\gen}[1]{\left< #1\right>}
\newcommand{\p}{\partial}
\newcommand{\Lam}{\Lambda}
\title{Surprising circles in Morse boundaries of right-angled Coxeter groups}
\author{Marius Graeber, Annette Karrer, Nir Lazarovich, and Emily Stark}
\date{\today}
\begin{document}

\thanks{The second author was supported by the Deutsche Forschungsgemeinschaft (DFG, German Research Foundation)--281869850 (RTG 2229). The third author was supported by the Israel Science Foundation (grant No. 1562/19) and the German-Israeli Foundation for Scientific Research and Development. The fourth author was supported by the Azrieli Foundation; partially supported at the Technion by a Zuckerman Fellowship; and partially supported by the NSF RTG grant $\#$1840190.}

\maketitle

\vskip.2in

\vspace{0.00001cm}
Given a graph $\Lambda$ the associated \emph{right-angled Coxeter group} is given by \[W_{\Lambda} = \gen{V(\Lambda) \, | \, \{v^2\}_{v\in V(\Lambda)} \cup  \{[v,w]\}_ {\{v,w\}\in E(\Lambda)}}.\]
See \cite{davis, dani} for background. 

A finitely generated group $\Gamma$ admits a quasi-isometry invariant {\it Morse boundary} defined by Cordes~\cite{cordes2017morse} and denoted $\p_M \Gamma$.
The Morse boundary of a CAT(0) group, such as a right-angled Coxeter group, is equal to the contracting boundary of the group, defined by Charney--Sultan~\cite{charney2015contracting}. Moreover, the Morse boundary of a hyperbolic group is equal to the visual boundary of the group. 

The topology of a boundary of a finitely generated group often captures algebraic information. 
For example, the visual boundary of a hyperbolic group is totally disconnected if and only if the group is virtually free. In contrast, the Morse boundary of every right-angled Artin group is totally disconnected~\cite[Theorem 1.1]{charneycordessisto}. The classification of the right-angled Coxeter groups that have totally disconnected Morse boundary is open. 
An induced cycle in a graph is \emph{burst} if it contains a pair of non-adjacent vertices that are contained in an induced $4$-cycle.

\begin{conjecture*}[Tran] \cite[Conjecture 1.14]{tran2019strongly} Let $\Lambda$ be a graph. The Morse boundary $\partial_M W_{\Lambda}$ is totally disconnected if and only if every induced cycle of length at least four in $\Lambda$ is burst.
\end{conjecture*}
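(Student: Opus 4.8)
The plan is to establish the two implications of the biconditional separately, working throughout with the action of $W_\Lambda$ on its Davis complex $\Sigma_\Lambda$, a CAT(0) cube complex on which the group acts properly and cocompactly. I identify points of $\partial_M W_\Lambda$ with equivalence classes of Morse geodesic rays, and I use the standard correspondence for right-angled Coxeter groups between flats in $\Sigma_\Lambda$ and special subgroups $W_J$ for which $J$ is a nontrivial join: the basic rank-two flat comes from an induced $4$-cycle, whose two pairs of non-adjacent vertices generate commuting copies of $D_\infty$, so that $W_J \cong D_\infty \times D_\infty$. I also record that an induced $4$-cycle is automatically burst (it contains its own non-adjacent pair inside itself), so the hypothesis has genuine content only for induced cycles of length $n \ge 5$; accordingly the forward direction need only produce connectedness from a non-burst cycle of length at least five.

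For the forward implication I argue the contrapositive: a non-burst induced cycle $C$ of length $n \ge 5$ forces $\partial_M W_\Lambda$ to contain a circle. Since $C$ has girth $n \ge 5$ it contains no induced $4$-cycle, so the special subgroup $W_C$ is a cocompact hyperbolic reflection group—the reflection group of a right-angled hyperbolic $n$-gon—with Gromov boundary a circle $S^1$. The crux is to show that $W_C$ is \emph{stable} in $W_\Lambda$, meaning the inclusion is a quasi-isometric embedding whose geodesics are uniformly Morse in $\Sigma_\Lambda$. Here the non-burst hypothesis is exactly the right input: because no induced $4$-cycle shares a non-adjacent pair with $C$, no copy of $D_\infty \times D_\infty$ absorbs a rank-one direction of $W_C$, so the convex hull of $W_C$ meets flats only in uniformly bounded regions. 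Invoking the fact that stable subgroups embed topologically into the ambient Morse boundary then yields $S^1 = \partial W_C \hookrightarrow \partial_M W_\Lambda$, so the boundary is not totally disconnected. I expect this direction to be essentially routine once the stability criterion for special subgroups is made precise.

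The reverse implication is the substantial one. Assuming every induced cycle of length $\ge 5$ is burst, I want any two distinct Morse points $\xi \ne \eta$ to admit a clopen separation. The strategy is to exhibit, along each Morse geodesic, a system of separating hyperplanes of $\Sigma_\Lambda$ whose crossing data provide a totally disconnected coding of the boundary. The burst hypothesis should \emph{break} every would-be circle: whenever a Morse geodesic shadows a long induced cycle $C$, the burst $4$-cycle supplies a flat into which nearby rays diverge, so that $C$ cannot contribute a connected family of Morse directions—the flat forces a branching that separates neighboring boundary points. Concretely I would try to extract from the burst condition a uniform lower bound on divergence transverse to every Morse ray, a bound incompatible with a continuum of Morse points accumulating on one another, and then convert this quantitative control into the desired clopen separations.

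The main obstacle is this reverse direction, and it is the step on which the entire equivalence rests. The burst condition is a \emph{local} statement about individual induced cycles, whereas total disconnectedness of $\partial_M W_\Lambda$ is a \emph{global} property, and bridging the two is delicate: a connected subset of the Morse boundary need not arise as the limit set of any single special subgroup $W_C$, but could in principle be assembled by a Morse geodesic that weaves through many cycles and flats, stitching together rank-one directions from distinct burst cycles into a single continuum. To finish the proof I would need to verify that the separating hyperplanes supplied by the burst $4$-cycles simultaneously separate \emph{every} pair of distinct Morse points, including those carried by such composite geodesics, thereby ruling out that a global configuration of burst cycles conspires to produce connectedness. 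Establishing this uniform, global separation from the purely local burst hypothesis is the heart of the matter, and I would concentrate my effort on confirming that no composite Morse geodesic can evade the walls—since it is precisely such an evasive, globally assembled circle that the title's "surprising circles" seem to concern.
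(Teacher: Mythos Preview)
Your proposal attempts to \emph{prove} the conjecture, but the paper \emph{disproves} it. The forward direction you sketch (a non-burst induced cycle of length $\ge 5$ yields a stable virtually Fuchsian subgroup, hence an embedded $S^1$ in $\partial_M W_\Lambda$) is correct and is precisely the content of Tran's own result \cite[Corollary~1.12]{tran2019strongly}. The reverse direction, however, is false, and the very phenomenon you flag in your final paragraph---a circle assembled globally from pieces that locally see only burst cycles---actually occurs.

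The paper's argument is not a proof of the biconditional but a counterexample to the reverse implication. It exhibits a concrete graph $\Lambda$ in which every induced cycle of length $\ge 4$ is burst, yet $\partial_M W_\Lambda$ contains an embedded $S^1$. The mechanism is exactly the global-versus-local mismatch you were worried about: one passes to an index-two reflection subgroup $W_{\Lambda'}\le W_\Lambda$ (obtained by doubling $\Lambda$ over the star of a vertex $x$ and deleting $x$), and in the new defining graph $\Lambda'$ a non-burst cycle appears. By the forward direction (Tran's result) $\partial_M W_{\Lambda'}$ contains $S^1$, and by quasi-isometry invariance of the Morse boundary $\partial_M W_\Lambda=\partial_M W_{\Lambda'}$. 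Thus the burst condition on $\Lambda$ does \emph{not} propagate to finite-index subgroups, and your proposed separation-by-hyperplanes scheme cannot succeed: the hyperplane walls furnished by the burst $4$-cycles of $\Lambda$ fail to separate the Morse points on this circle, because from the viewpoint of $\Lambda'$ those points are carried by a genuine non-burst cycle. Your instinct in the last sentence was right; the correct conclusion is that the ``substantial'' direction is simply false.
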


We give a negative answer to the above conjecture.

\begin{theorem*} \label{thm:main}
    There exists a graph in which every induced cycle of length at least four is burst and the associated right-angled Coxeter group contains an embedded circle $\mathbb{S}^1$ in its Morse boundary.
\end{theorem*}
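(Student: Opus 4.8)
The plan is to exhibit an explicit finite graph $\Lam$ together with a stable (equivalently, Morse) subgroup $H \le W_\Lam$ whose Gromov boundary is a circle. Since $W_\Lam$ is a CAT(0) group, $\p_M W_\Lam$ is the contracting boundary of the Davis complex $X_\Lam$, and it is standard that a stable subgroup contributes an embedded copy of its boundary to $\p_M W_\Lam$; so such an $H$ yields the desired embedded $\mathbb{S}^1 \hookrightarrow \p_M W_\Lam$. It is worth saying at the outset why the naive candidate fails. If $C$ is an induced cycle of length at least five, then $W_C$ acts geometrically on a hyperbolic plane $X_C \subseteq X_\Lam$ and $\p W_C \cong \mathbb{S}^1$; but as soon as $C$ is burst --- say a nonadjacent pair $u, v \in C$ lies in an induced $4$-cycle $\{u, x, v, y\}$ --- the special subgroup $W_{\{u,x,v,y\}} \cong D_\infty \times D_\infty$ is a flat of $X_\Lam$ containing the bi-infinite geodesic $X_{\{u,v\}} \subseteq X_C$, so $W_C$ is not Morse. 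Under the hypothesis of the theorem no special surface subgroup can carry a circle, so the circle we produce must come from a subgroup that is not of this form --- hence the word ``surprising''.

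Concretely I would proceed as follows. (1) Build $\Lam$ from a seed graph containing a long induced cycle by attaching ``square flaps'' so that every induced cycle of length at least four is burst, while arranging the flaps so that $W_\Lam$ contains a reflection subgroup $H = \gen{r_1, \dots, r_m}$, each $r_i$ a conjugate of a standard generator, whose reflections pairwise either commute or generate an infinite dihedral group in the incidence pattern of the right-angled Coxeter group on an $m$-gon with $m \ge 5$; then $H$ is a cocompact Fuchsian group and $\p H \cong \mathbb{S}^1$. (2) Verify the combinatorial condition: every induced $4$-cycle is automatically burst, since a diagonal pair of the $4$-cycle lies in that very $4$-cycle, so this reduces to a finite classification of the induced cycles of length at least five in $\Lam$, exhibiting for each a pair of nonadjacent vertices contained in an induced $4$-cycle. (3) Prove that $H$ is stable in $W_\Lam$, and conclude that $\mathbb{S}^1 \cong \p H \hookrightarrow \p_M W_\Lam$.

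I expect step (3) to be the main obstacle. Stability of $H$ can fail only through the flats of $X_\Lam$ created by the bursting squares, so the task is to show that no infinite-order element of $H$ has an axis fellow-traveling a flat --- equivalently, that the $H$-invariant combinatorial hyperbolic plane meets each translate of an $X_Q$, with $Q$ an induced $4$-cycle, in a set of uniformly bounded diameter. The strategy is to work in the CAT(0) cube complex structure of $X_\Lam$: describe the hyperplanes dual to the reflections $r_i$, show each crosses the hyperplanes of any given flat in only a bounded pattern, deduce the bounded-intersection statement, and then upgrade this local control to a uniform Morse gauge for all geodesics between points of an $H$-orbit, either by a direct contracting estimate in $X_\Lam$ or via a Morse local-to-global principle. (If the chosen example contains no such reflection subgroup, one runs the same argument with the $H$-orbit replaced by a suitable $W_\Lam$-cocompact subcomplex $Y \subseteq X_\Lam$ quasi-isometric to $\mathbb{H}^2$ --- a ``Morse hyperbolic plane'' --- producing a circle of uniformly Morse geodesic rays directly.)
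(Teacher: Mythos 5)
Your proposal correctly identifies the shape of the example --- the circle must come from a cocompact Fuchsian reflection subgroup that is not a special subgroup, and it must avoid all the flats created by the bursting squares --- and your opening observation about why no special $W_C$ can work is exactly right. But as written the argument has two genuine gaps, and they are precisely the two points where all the content of the theorem lives. First, step (1) is not carried out: no graph is exhibited, and the existence of one is not obvious. There is a real tension between ``attach square flaps until every long induced cycle is burst'' and ``leave room for a reflection $m$-gon whose hyperbolic plane misses every flat,'' since each flap you attach creates a flat that cuts a line through the hyperbolic planes you are trying to preserve. Asserting that the flaps can be ``arranged'' so that a suitable $H$ survives is asserting the theorem. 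Second, step (3) --- stability of $H$ --- is explicitly deferred, and the sketch (bound the intersection of the $H$-plane with every flat, then upgrade to a Morse gauge) is a nontrivial project; bounded intersection with each individual flat does not by itself give a uniform contracting constant without further work.

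The paper's proof sidesteps both difficulties with one trick that your proposal does not anticipate: the burst condition is not invariant under passing to finite-index reflection subgroups. One takes an explicit graph $\Lambda$ with every induced cycle of length at least four burst, doubles it over the star of a vertex $x$, and deletes $x$; the resulting graph $\Lambda'$ defines an index-$2$ reflection subgroup $W_{\Lambda'} \le W_{\Lambda}$ (so $\p_M W_{\Lambda} = \p_M W_{\Lambda'}$), and $\Lambda'$ contains a \emph{non-burst} induced cycle. At that point Tran's own Corollary 1.12 of \cite{tran2019strongly} --- a non-burst induced cycle of length at least five forces an embedded circle in the Morse boundary --- finishes the proof. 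Your Fuchsian subgroup $H$ is exactly the special subgroup of $W_{\Lambda'}$ on that non-burst cycle, but the doubling move converts your hard steps (1) and (3) into a finite graph-theoretic check on $\Lambda$ and $\Lambda'$ plus a citation, with no hyperplane or contracting estimates needed. If you want to salvage your route, the most efficient repair is to replace step (3) by this observation: look for your $m$-gon of reflections inside the defining graph of an index-two reflection subgroup rather than proving stability by hand in the cube complex.
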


\begin{proof}
Let $\Lambda$ and $\Lambda'$ be the graphs in Figure \ref{fig: graphs}. Every cycle in $\Lambda$ is burst. The graph $\Lambda'$ is obtained by doubling $\Lambda$ over the star of $x$ then deleting the vertex $x$. Thus, the group $W_{\Lambda'}$ is an index-2 subgroup of $W_{\Lambda}$ by \cite[Lemma 2.3]{dani2014quasi}\footnote{The lemma does not appear in the published version~\cite{danithomasJSJ}}. Therefore, $\partial_M W_{\Lambda} = \partial_M W_{\Lambda'}$.
 The graph $\Lambda'$ contains a non-burst cycle, drawn in red. Thus, $\partial_M W_{\Lambda'}$, and hence $\partial_M W_{\Lambda}$, contains $\mathbb{S}^1$ by \cite[Corollary 1.12]{tran2019strongly}. This also follows from \cite[Theorem~1.4]{tran2019strongly} together with \cite[Proposition 4.9]{Genevois} or \cite[Theorem 7.5]{Tran_hierarchically}.
\end{proof}

\vskip.1in

    \begin{figure}[ht!]
    \centering
	\begin{overpic}[width=.6\textwidth,tics=5]{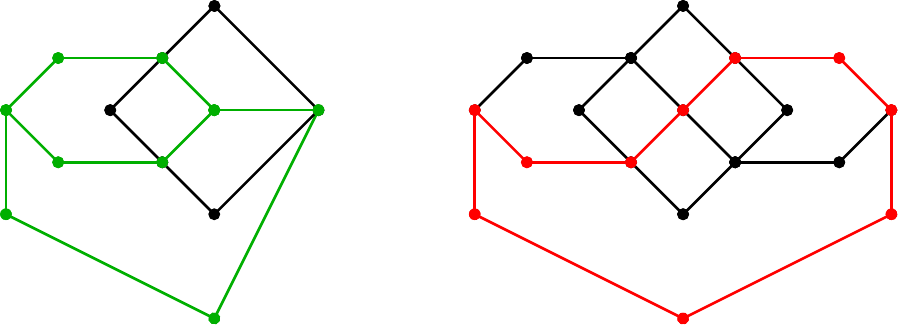} 
 	  \put(38,23){$x$}
    \end{overpic}
	\caption{The graphs $\Lambda$ on the left and $\Lambda'$ on the right.}
	\label{fig: graphs}
    \end{figure}

    \begin{figure}[ht!]
    \centering
	\begin{overpic}[width=.6\textwidth,tics=5]{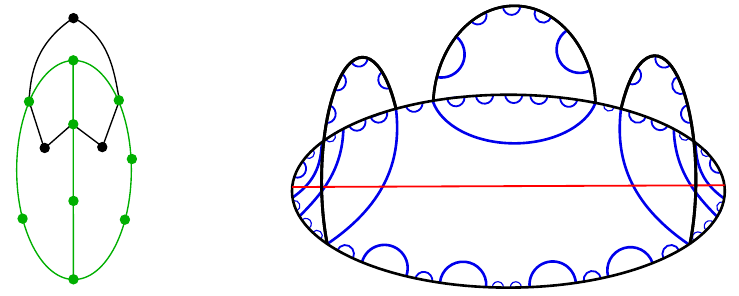} 
    \end{overpic}
	\caption{The graph $\Lambda$ and hyperbolic planes branching along lines.}
	\label{figure:planes}
    \end{figure}

\noindent {\bf Geometric intuition.}
    As shown above, the Morse boundary of the right-angled Coxeter group~$W_{\Lam}$ contains an embedded circle. Indeed, the group $W_{\Lam}$ has an index-two subgroup whose defining graph yields a cocompact Fuchsian reflection subgroup that intersects any Euclidean reflection subgroup in only a bounded set. 
    In particular, there is a quasi-isometrically embedded hyperbolic plane in the Davis complex for $W_{\Lam}$ that intersects any Euclidean plane in only a bounded subset; we sketch an alternative geometric description of such a subspace. 
        
    The graph $\Lambda$ contains a subdivided $\Theta$-graph $\Theta$ as an induced subgraph, as shown in green in the figures. The subgraph $\Theta$ contains three embedded induced cycles of length greater than four. These three cycles yield quasi-isometrically embedded hyperbolic planes in the Davis complex for~$W_{\Lambda}$. However, the Morse boundary does not contain the visual boundary of these hyperbolic planes. Indeed, each such cycle is burst, so each such hyperbolic plane in the Davis complex intersects a Euclidean plane in a line. 
    
    Nonetheless, a hyperbolic plane in the Davis complex for $W_{\Lam}$ that meets any Euclidean plane in only a bounded subset can be constructed by piecing together subsets of these hyperbolic planes and their $W_{\Theta}$-translates. The hyperbolic planes arising from the three cycles in $\Theta$ and their $W_{\Theta}$-translates intersect to form $\mathcal{X}$, a ``tree'' of hyperbolic planes branching along lines. See Figure~\ref{figure:planes} and \cite[Section 3]{hruskastarktran} for additional details. The cycles of length four in $\Lam$ yield Euclidean planes whose intersections with $\mathcal{X}$ are quasi-isometric to lines. These lines are contained in one hyperbolic plane and intersect branching lines transversely, as shown in red in Figure~\ref{figure:planes}. Thus, a hyperbolic plane meeting each Euclidean plane in a bounded subset can be constructed by making a choice at each branching line in $\mathcal{X}$. 
    
    Additional examples will appear in \cite[Section 5.5]{Karrer2020Thesis}.

\begin{remark}
    A {\it finite-index reflection subgroup} of a right-angled Coxeter group $W_{\Lambda}$ is a subgroup generated by reflections about the set of hyperplanes bounding a compact, convex subcomplex of the Davis complex for $W_{\Lambda}$. 
    The conjecture above still fails if one is allowed to pass to a finite-index reflection subgroup.
    Indeed, after an initial preprint of this paper, Hung Cong Tran explained if~$\Lambda$ is the $1$-skeleton of a $3$-cube, then $W_{\Lambda}$ also provides a counterexample to the conjecture above, using~\cite{charneycordessisto}. One can show there does not exist a finite-index reflection subgroup of $W_{\Lambda}$ whose defining graph contains an induced cycle of length at least four which is not burst. 
\end{remark}

\subsection*{Acknowledgements.} The authors are thankful for helpful discussions with Matthew Cordes, Pallavi Dani, Tobias Hartnick, Ivan Levcovitz, and Petra Schwer. The authors thank Hung Cong Tran for comments on a preprint of this paper. 

\bibliographystyle{abbrv}
\bibliography{biblio}

\end{document}